\makeatletter \@namedef{subjclassname@2010}{
  \textup{2020} Mathematics Subject Classification}
\newtheorem{thm}{Theorem}[section]
\newtheorem{pro}[thm]{Proposition}
\theoremstyle{remark}
\newtheorem*{rema}{Remark}
\theoremstyle{definition}
\newcommand{\Ima}{\operatorname{Im}}
\newcommand{\Rea}{\operatorname{Re}}
\newcommand{\R}{\mathbb{R}}
\newcommand{\N}{\mathbb{N}}
\newcommand{\C}{\mathbb{C}}
\begin{document}

\title[Square root theorem for positive matrices]{The square root theorem for positive semidefinite matrices and their monotonicity}
\author[M. A. Aouichaoui and M. H. Mortad]{Mohamed Amine Aouichaoui and Mohammed Hichem Mortad$^*$}

\thanks{* Corresponding author.}
\date{}
\keywords{}

\subjclass[2010]{Primary 15B48. Secondary 15A39, 15A27, 15A18.}

\address{(The first author) University of Monastir, Institute of Preparatory Studies in
Engineering of Monastir, Tunisia, 5019 Monastir, Tunisia.}

\email{amineaouichaoui@yahoo.com, amine.aouichaoui@fsm.rnu.tn}

\address{(The corresponding and second author) Department of
Mathematics, University of Oran 1, Ahmed Ben Bella, B.P. 1524, El
Menouar, Oran 31000, Algeria.}

\email{mhmortad@gmail.com, mortad.hichem@univ-oran1.dz.}

\begin{abstract}In this note, simple proofs of certain
well-known results involving the positive square root of positive
matrices are provided.
\end{abstract}

\maketitle

\section{Introduction}

Let $n\in\N$. A matrix $A$, defined on $\C^n$, is called positive
semidefinite when $\langle Ax,x\rangle\geq 0$ for all $x\in\C^n$.
This implies that such matrix must be self-adjoint, that is,
$A=A^*$. Also, self-adjoint matrices always possess eigenvalues.
Moreover, if $A$ is a self-adjoint matrix, then $A=0$ if and only if
$A$ has one unique eigenvalue, namely $\lambda=0$.

If $T$ is a matrix, which is defined on $\C^n$, then we may always
express it as $A+iB$ where $A$ and $B$ are two self-adjoint
matrices. In such case, $A=(T+T^*)/2:=\Rea T$ and
$B=(T-T^*)/{2i}:=\Ima T$, and they are called the real and imaginary
parts of $T$ respectively. Recall that a square root of a matrix $A$
is any matrix $B$ that obeys $B^2=A$. In the end, if $A$ is positive
demidefinite, then $\langle Ax,x\rangle=0$ gives $Ax=0$.

When teaching an advanced linear algebra course (or a similar
course), the following theorem is usually shown.

\begin{thm}\label{sqroot THM}
Let $A$ be a positive semidefinite matrix. Then
\begin{enumerate}
  \item $A$ possesses a unique positive semidefinite square root, noted
$\sqrt A$.
  \item If $B$ is a matrix such that $BA=AB$, then $B\sqrt
A=\sqrt A B$.
\end{enumerate}
\end{thm}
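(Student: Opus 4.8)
The plan is to prove existence first, then obtain part (2) from an explicit polynomial representation of the square root, and finally deduce uniqueness from part (2); this reordering is natural because the commutativity statement is exactly the tool that makes uniqueness transparent. For existence I would invoke the spectral theorem. Since $A$ is positive semidefinite it is self-adjoint, so there exist a unitary $U$ and a real diagonal matrix $D=\operatorname{diag}(\lambda_1,\dots,\lambda_n)$ with $A=UDU^*$; each $\lambda_i\geq 0$ because $\lambda_i=\langle Au_i,u_i\rangle\geq 0$ for the associated unit eigenvector $u_i$. Setting $\sqrt A=U\operatorname{diag}(\sqrt{\lambda_1},\dots,\sqrt{\lambda_n})U^*$ produces a matrix that is manifestly positive semidefinite and squares to $A$.

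The decisive step, and the one that powers both remaining claims, is to observe that $\sqrt A$ is a polynomial in $A$. Writing $\mu_1,\dots,\mu_k$ for the distinct eigenvalues of $A$, Lagrange interpolation furnishes a polynomial $p$ with $p(\mu_j)=\sqrt{\mu_j}$ for each $j$; then $p(A)$ and the spectrally defined $\sqrt A$ agree on every eigenspace, so $\sqrt A=p(A)$. Part (2) is now immediate: if $BA=AB$, then $B$ commutes with every power of $A$, hence with any polynomial in $A$, so $B\sqrt A=Bp(A)=p(A)B=\sqrt A B$.

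For uniqueness, suppose $C$ is another positive semidefinite matrix with $C^2=A$. Then $CA=C\,C^2=C^2C=AC$, so $C$ commutes with $A$, and part (2) yields that $C$ commutes with $\sqrt A$. Two commuting self-adjoint matrices are simultaneously unitarily diagonalizable, so in a common orthonormal eigenbasis both $C$ and $\sqrt A$ become diagonal with non-negative entries; since $C^2=A=(\sqrt A)^2$, the squared entries coincide, and a non-negative real has a unique non-negative square root, whence $C=\sqrt A$.

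I expect the main obstacle to be uniqueness, and specifically the fact that the competing root $C$ is given only abstractly rather than as a polynomial in $A$, so it cannot be compared with $\sqrt A$ until one first knows that the two commute. Once part (2) supplies this commutativity, simultaneous diagonalization reduces everything to the scalar statement. A diagonalization-free alternative leading to the same end is to note that commutativity gives $(\sqrt A-C)(\sqrt A+C)=0$; sandwiching by $\sqrt A-C$ splits this as a sum of the two positive semidefinite terms $(\sqrt A-C)\sqrt A(\sqrt A-C)$ and $(\sqrt A-C)C(\sqrt A-C)$, each of which must therefore vanish, forcing $(\sqrt A-C)^2=0$, and self-adjointness of $\sqrt A-C$ then gives $C=\sqrt A$.
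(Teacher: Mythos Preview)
Your proof is correct but follows a genuinely different path from the paper's. The paper establishes uniqueness first, via the Koeber--Sch\"afer eigenvalue trick: if $R^2=S^2=A$ with $R,S\geq 0$ and $(R-S)x=\lambda x$, one expands $0=\langle(R^2-S^2)x,x\rangle$ to obtain $\lambda(\langle Rx,x\rangle+\langle Sx,x\rangle)=0$ and concludes $\lambda=0$ directly. For part~(2) the paper splits $B=\Rea B+i\,\Ima B$ and invokes simultaneous diagonalization of the commuting self-adjoint pairs $(A,\Rea B)$ and $(A,\Ima B)$ to see that $\sqrt A$ commutes with each piece. Your route instead pivots on the single observation that $\sqrt A=p(A)$ for a Lagrange interpolant $p$, which makes part~(2) a one-liner and then feeds back into uniqueness through commutativity of $C$ with $\sqrt A$ and a second simultaneous diagonalization. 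The polynomial representation is conceptually cleaner---it explains \emph{why} $\sqrt A$ commutes with everything $A$ does, since it lies in the algebra generated by $A$---and it dispatches part~(2) more efficiently than the paper's real/imaginary decomposition. On the other hand, the paper's Koeber--Sch\"afer uniqueness argument is entirely self-contained and does not rely on part~(2) at all, which keeps the logical dependencies linear rather than circular through the two claims.
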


While the proof of existence is straightforward, uniqueness,
however, is usually not very straightforward. Here, we include a
proof that first appeared in \cite{Koeber-Schafer}, which, and as
far as we are aware, is the most elegant proof that exists in the
literature. The last claim of Theorem \ref{sqroot THM} is also
well-known, but again we present a simple proof, which is based on
simultaneous diagonalization of self-adjoint matrices (see, e.g.,
Exercise 14 in Chapter in \cite{Friedberg-Insel-Spence-LIN-ALG} for
the latter concept).

Then we include a simple proof of the monotonicity of the square
root. Recall in passing that P. R. Halmos made the following comment
in Solution 154 in \cite{Halmos-Linear-Algebra-pb-book-1995}:
Various proofs of "this result" can be constructed, but none of them
jumps to the eye. Therefore, the proof given below deserves to be
shared with the mathematical community. In fine, students and
lecturers alike might be interested in consulting a few references
that deal with similar subjects in both finite and
infinite-dimensional settings. See
\cite{Conway-Morrel-Roots-Logarithms}, \cite{Dehimi-Mortad-Reid},
\cite{Mortad-Oper-TH-BOOK-WSPC}, \cite{Mortad-square-root-normal},
\cite{Mortad-cex-BOOK}, \cite{Mortad-JMAA-p(A)-2023},
\cite{Putnam-square-rt-s-a-logarithm},
\cite{Radjavi-Rosenthal-sq-roots-normal} and
\cite{Shamir-Cohen-Root-matrices-Optics}. For useful references on
general matrix theory, readers are referred to \cite{Axler LINEAR
ALG DONE RIGHT}, \cite{Friedberg-Insel-Spence-LIN-ALG},
\cite{Garcia-Horn-BOOK}, \cite{Halmos-Linear-Algebra-pb-book-1995},
and \cite{Prasolov-Lin-Alg-Book}.

\section{Simple proofs of standard results involving positive semidefinite square roots of matrices}

\begin{proof}[Proof of Theorem \ref{sqroot THM}]\hfill
\begin{enumerate}
  \item As it is known, the existence is obtained using the spectral
theorem. In other words, a square root of $A$ is given by $UDU^*$
for a certain unitary matrix $U$ and where $D$ is a diagonal matrix
whose diagonal has solely the positive square roots of the positive
eigenvalues of $A$.

The proof of uniqueness is borrowed from \cite{Koeber-Schafer} with
a slight modification. Let $R$ and $S$ be two positive semidefinite
matrices obeying $R^2=S^2=A$. W have to show that $R=S$. Since $R-S$
is self-adjoint, to show that $R=S$, it suffices to show that the
only eigenvalue of $R-S$ is $\lambda=0$. So, let $\lambda\in\R$ be
such that $Rx=Sx+\lambda x$ for some nonzero $x$. Then
\[0=\langle(R^2-S^2)x,x\rangle=\langle R(R-S)x,x\rangle+\langle(R-S)Sx,x\rangle=\langle\lambda x,Rx\rangle+\langle Sx,\lambda x\rangle.\]
Hence
\[\lambda(\langle Rx,x\rangle+\langle Sx,x\rangle)=0.\]
So, either $\lambda=0$ or $\langle Rx,x\rangle+\langle
Sx,x\rangle=0$. But, in the latter case, we get $\langle
Rx,x\rangle=\langle Sx,x\rangle=0$ for $R$ and $S$ are both positive
semidefinite and so $Rx=Sx=0$. So, $\lambda x=0$, i.e. $\lambda=0$
again. Thus, we always end up with $\lambda=0$.
  \item If $BA=AB$, then $AB^*=B^*A$ and so
  $A(\Rea B)=(\Rea B)A$ and $A(\Ima B)=(\Ima B)A$. Since $A$ and $\Rea
B$ are self-adjoint, they are simultaneously diagonalizable, i.e.,
for the same unitary matrix $U$, $U^*AU=E$ and $U^*(\Rea B)U=F$
where $E$ and $F$ are two diagonal matrices (with the eigenvalues of
$A$ and $\Rea B$ on their diagonals respectively). Hence
$\sqrt{A}=U\sqrt E U^*$. Since $\sqrt E$ obviously commutes with
$F$, it is seen that $\sqrt A(\Rea B)=(\Rea B)\sqrt A$. An akin
reasoning then allows us to obtain $\sqrt A(\Ima B)=(\Ima B)\sqrt
A$. Thus, $B\sqrt A=\sqrt A B$, as desired.
\end{enumerate}
\end{proof}

The coming result is also well-documented, but the proof here is
once again greatly simple (cf.
\cite{Halmos-Linear-Algebra-pb-book-1995}, Problem 154).

\begin{pro}
Assume $A$ and $B$ are self-adjoint matrices that satisfy $A\geq
B\geq0$. Then $\sqrt A\geq \sqrt B$.
\end{pro}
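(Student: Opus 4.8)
The plan is to mirror the uniqueness argument of Theorem \ref{sqroot THM} almost verbatim. Write $R=\sqrt A$ and $S=\sqrt B$, so that $R$ and $S$ are positive semidefinite with $R^2=A$ and $S^2=B$. Since $R-S$ is self-adjoint, it is positive semidefinite precisely when all of its eigenvalues are nonnegative; hence, to prove $\sqrt A\geq\sqrt B$, it suffices to show that every (necessarily real) eigenvalue $\lambda$ of $R-S$ satisfies $\lambda\geq 0$.

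So I would fix such a $\lambda$ together with a nonzero eigenvector $x$, i.e. $Rx=Sx+\lambda x$. The key is the same factorization used before, namely $R^2-S^2=R(R-S)+(R-S)S$. Applying it, and using that $R-S$ is self-adjoint while $\lambda\in\R$, one computes
\[\langle(R^2-S^2)x,x\rangle=\lambda\bigl(\langle Rx,x\rangle+\langle Sx,x\rangle\bigr).\]
The crucial new ingredient, which replaces the hypothesis $R^2=S^2$ of the uniqueness proof, is the assumption $A\geq B$: it yields $\langle(R^2-S^2)x,x\rangle=\langle(A-B)x,x\rangle\geq 0$, and therefore $\lambda\bigl(\langle Rx,x\rangle+\langle Sx,x\rangle\bigr)\geq 0$.

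From here the conclusion follows from the positivity of $R$ and $S$. Since $\langle Rx,x\rangle\geq 0$ and $\langle Sx,x\rangle\geq 0$, their sum is nonnegative. If it is strictly positive, then $\lambda\geq 0$ at once. If it vanishes, then $\langle Rx,x\rangle=\langle Sx,x\rangle=0$, whence $Rx=Sx=0$ by the last remark of the Introduction; consequently $\lambda x=(R-S)x=0$, and thus $\lambda=0$ because $x\neq 0$. In either case $\lambda\geq 0$, which is exactly what is needed.

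I do not expect a genuine obstacle here: once one decides to test the positivity of $R-S$ on its own eigenvectors, the algebra is essentially forced. The only point demanding a little care is the degenerate case $\langle Rx,x\rangle+\langle Sx,x\rangle=0$, where one must invoke the implication that $\langle Rx,x\rangle=0$ forces $Rx=0$ for positive semidefinite $R$; everything else is the identity $R^2-S^2=R(R-S)+(R-S)S$ reused from part (1).
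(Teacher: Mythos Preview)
Your argument is correct and coincides essentially verbatim with the paper's first proof: the same eigenvalue reduction, the same factorization $R^2-S^2=R(R-S)+(R-S)S$, and the same handling of the degenerate case via $\langle Rx,x\rangle=0\Rightarrow Rx=0$. The paper additionally records a second, independent proof based on the Cauchy--Schwarz inequality, but your route matches the primary one.
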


\begin{proof}The matrix $\sqrt A-\sqrt B$, which is already self-adjoint,
is positive semidefinite when its eigenvalues are all positive. So,
let $\lambda\in\R$ be such that $(\sqrt A-\sqrt B)x=\lambda x$ for
some nonzero $x$. The aim now is to obtain $\lambda\geq0$, and we
provide two simple ways of doing that.

\begin{enumerate}
  \item \textbf{First proof:} We have \[0\leq\langle(A-B)x,x\rangle=\langle\sqrt A(\sqrt A-\sqrt B)x,x\rangle+\langle(\sqrt A-\sqrt B)\sqrt Bx,x\rangle\]
and so $\langle(\sqrt A-\sqrt B)x,\sqrt A x\rangle+\langle\sqrt
Bx,(\sqrt A-\sqrt B)x\rangle\geq 0$ or merely $\lambda(\langle\sqrt
Ax,x\rangle+\langle x,\sqrt Bx\rangle)\geq0$. So,

\begin{enumerate}
  \item If $\langle\sqrt Ax,x\rangle+\langle x,\sqrt Bx\rangle\neq0$, then
$\lambda\geq0$;
  \item and if $\langle\sqrt Ax,x\rangle+\langle x,\sqrt
Bx\rangle=0$, then $\langle\sqrt Ax,x\rangle=\langle x,\sqrt
Bx\rangle=0$ due to the semidefinite positiveness of both $\sqrt A$
and $\sqrt B$. Hence $\sqrt Ax=\sqrt Bx=0$, thereby $\lambda x=0$
from which we derive $\lambda=0$, as desired.
\end{enumerate}
Accordingly, and in either case, $\lambda\geq0$, as needed.
  \item \textbf{Second proof:} We show that the eigenvalues of $\sqrt A-\sqrt B$ are nonnegative.
  Let $(\lambda, x)$ be an eigenpair: $(\sqrt A-\sqrt B) x=\lambda x$. So $\sqrt B x=\sqrt A x-\lambda x, \lambda \in \mathbb{R}$, and
$$
\langle \sqrt A\sqrt B x,x\rangle=\langle \sqrt A (\sqrt A x-\lambda
x),x\rangle=\langle Ax, x\rangle-\lambda \langle \sqrt A x,x\rangle
\in \mathbb{R} .
$$
By the Cauchy-Schwarz inequality, we obtain
$$
\begin{aligned}
\langle Ax,x\rangle & =\langle Ax,x\rangle^{1 / 2}\langle Ax,x\rangle^{1 / 2} \geq \langle Ax,x\rangle^{1 / 2}\langle Bx,x\rangle^{1 / 2} \\
& \geq\left|\langle \sqrt A\sqrt B x,x\rangle\right| \geq \langle \sqrt A\sqrt B x,x\rangle \\
& =\langle Ax, x\rangle-\lambda \langle \sqrt A x,x\rangle.
\end{aligned}
$$
Therefore, $\lambda \langle \sqrt A x,x\rangle \geq 0$. If
$\lambda<0$, we deduce $\langle \sqrt A x,x\rangle=0$ which then
yields $\langle \sqrt B x,x\rangle=0$ for $A\geq B\geq0$. Hence
$\sqrt A x=\sqrt B x=0$, whereby

\[0=\sqrt B x=\sqrt Ax-\lambda x=-\lambda x,\]
which contradicts the fact that $\lambda < 0$ and $x \neq 0$.
\end{enumerate}
\end{proof}

\begin{rema}
Readers may wish to consult \cite{Dehimi-Mortad-Reid} for fairly
simple proofs in infinite-dimensional settings compared to some
others in the existing literature (as, e.g., Exercise 19 on Page 310
of \cite{Costara-Popa-exos-func-analysis-2003-trans-from-Russian} or
Proposition 4.2.8 of \cite{Kadison-Ringrose-I-Cours}).
\end{rema}

\begin{rema}
Recall that if $A$ and $B$ are $n$-square positive semidefinite
matrices, then there exists an invertible matrix $P$ such that $$
P^{*} A P \quad \text { and } \quad P^{*} B P
$$ are both diagonal matrices (see \cite{Newcomb}).

Using this result, by writing $A=P^{*} D_{1} P$ and $B=P^{*} D_{2}
P$, where $P$ is an invertible matrix, and $D_{1}$ and $D_{2}$ are
diagonal matrices with nonnegative entries, we derive other results
of monotonicity for positive semidefinite matrices. That's, if $A
\geq B \geq 0$, then
\begin{enumerate}
\item $\operatorname{rank}(A) \geq \operatorname{rank}(B).$

\item $\operatorname{det}(A) \geq \operatorname{det}(B).$

\item $\operatorname{tr}(A) \geq \operatorname{tr}(B)$, with equality if and only if $A = B.$

\item $B^{-1} \geq A^{-1}$ if $A$ and $B$ are nonsingular.
\end{enumerate}
First, observe that $A \geq B \geq 0$ is equivalent to $D_{1} \geq
D_{2} \geq 0.$

\vspace{2mm}

$(1)$ Since $P$ is an invertible matrix, it suffices to prove
$\operatorname{rank}(D_{1}) \geq \operatorname{rank}(D_{2}).$ Put $
  D_{1} = \operatorname{diag}\left(\lambda_{1}, \lambda_{2}, \ldots, \lambda_{n}\right)$ and $
  D_{2} = \operatorname{diag}\left(\mu_{1}, \mu_{2}, \ldots, \mu_{n}\right).$ Since $D_{1} \geq D_{2} \geq 0,$ it follows that $\lambda_{k} \geq \mu_{k} \geq 0,$ for all $k = 1, 2, \ldots, n.$ And so $\#\{ 1 \leq k \leq n : \, \mu_{k} \neq 0 \} \leq \#\{ 1 \leq k \leq n: \, \lambda_{k} \neq 0 \}.$ This implies $\operatorname{rank}(D_{1}) \geq \operatorname{rank}(D_{2}).$

\vspace{2mm}

$(2)$  $\operatorname{det}(A) = |\operatorname{det}(P)|^{2}
\prod_{k=1}^n \lambda_{k} \geq |\operatorname{det}(P)|^{2}
\prod_{k=1}^n \mu_{k}= \operatorname{det} B.$

\vspace{2mm}

$(3)$ Since the matrix $A-B$ is positive semidefinite, its
eigenvalues are non-negative, and so $\operatorname{tr}(A-B) \geq
0.$ The result follows by linearity of $\operatorname{tr}.$

\vspace{2mm}

$(4)$ If $A$ and $B$ are nonsingular, then we have $\lambda_{k} \geq
\mu_{k} > 0,$ for all $k = 1, 2, \ldots, n.$ This implies that $0 <
\frac{1}{\lambda_{k}} \leq \frac{1}{\mu_{k}},$ for all $k = 1, 2,
\ldots, n.$ Hence, $D_{2}^{-1} \geq D_{1}^{-1},$ which implies that
$B^{-1} \geq A^{-1}.$

\end{rema}

\section*{Conclusion}

The above results, which are very well known, may now be easily
included in our lectures. Besides, the given proofs are much easier
for students to assimilate.

\end{document}